\theoremstyle{plain}
\newtheorem{thm}{Theorem}[section]
\newtheorem{lem}{Lemma}[section]
\newtheorem{cor}{Corollary}[section]
\newtheorem{prop}{Proposition}[section]
\theoremstyle{proof}
\numberwithin{equation}{section}
\begin{document} 
\title[On a Lebesgue-Ramanujan-Nagell equation]{On the complete solutions of a generalized Lebesgue-Ramanujan-Nagell equation}
\author{Kalyan Chakraborty and Azizul Hoque}
\address{KC @Department of Mathematics, SRM University AP, Mangalagiri-Mandal, Guntur-522240, Andhra Pradesh, India}
\email{kalyan.c@srmap.edu.in} 
\address{AH @Department of Mathematics, Faculty of Science, Rangapara College, Rangapara, Sonitpur-784505, Assam, India.}
\email{ahoque.ms@gmail.com}
\keywords{Diophantine equation, Primitive divisor, Lehmer sequence, Elliptic curve, Quartic curve, S-Integers}
\subjclass[2020] {11D61, 11D41, 11B39, 11Y50}
\date{\today}
\maketitle

\begin{abstract}
We consider the generalized Lebesgue-Ramanujan-Nagell equation $x^2+17^k41^\ell 59^m=2^\delta y^n$ in the unknown integers $x\geq 1, y>1,n\geq 3$ and $k, \ell, m\geq 0$ satisfying $\gcd(x,y)=1$. We first find  all the integer solutions of the above equation, and then use this result to determine all the integer solutions of some other Lebesgue-Ramanujan-Nagell type equations. Our method uses the classical results of Bilu, Hanrot and Voutier on existence of primitive divisors of Lehmer sequences in combination with number theoretic arguments and computer search.
\end{abstract}
\section{Introduction}\label{S1}
Let $d$ and $\lambda$ be two fixed positive integers. The Diophantine equation, 
\begin{equation}\label{eqi1}
x^2+d^m=\lambda y^n,~~ x,y\geq 1, m\geq 0, n\geq 3,
\end{equation} 
is known as a generalized Lebesgue-Ramanujan-Nagell equation. There are many results concerning the integer solutions $(x, y, m, n)$ of \eqref{eqi1} for given $d$ and $\lambda$. We refer the readers to the papers \cite{AA02, BP12,CO03,P2013} for $\lambda=1$; \cite{AL09, ZL12} for $\lambda=2$ and \cite{BHS19, CHS20, LTT09} for $\lambda=4$ for further reading. Some authors studied more general Diophantine equations (cf. \cite{AAL02, BU01, CH22, CHS21, HO20, LE93}).  We refer the beautiful survey \cite{LS20} for further information on the topic. 

In recent times many authors considered the following generalization of \eqref{eqi1}:
\begin{equation}\label{eqi2}
x^2+p_1^{k_1}p_2^{k_2}\cdots p_r^{k_r}=\lambda y^n,~~ x,y\geq 1, \gcd(x,y)=1, k_1, k_2,\cdots, k_r, m\geq 0, n\geq 3,
\end{equation}
where $\lambda=1,2$ and $p_1, p_2, \cdots, p_r$ are distinct primes with $r\geq 2$. We refer the papers (\cite{CHS21b, DE17,LT09, AL08, PI07, ZL15} for $\lambda=1$; \cite{APA2024, AL09, PI04} for $\lambda=2$) for further information. In this paper, we continue the investigation on the integer solutions of certain equations of the form  \eqref{eqi2} with $\lambda=2^\delta$, where $\delta\geq 0$ is a fixed integer. More precisely, we consider the Diophantine equation 
$$x^2+17^k41^\ell 59^m =2^\delta y^n,~~x\geq 1, y>1, \gcd(x,y)=1, k, \ell, m, \delta\geq 0, n\geq 3,$$
in unknown integers $x,y,k,\ell, m,n$. Clearly, reducing the above equation at modulo $8$ for $\delta\geq 3$, we can conclude that it has no integer solutions. Thus, the above equation becomes non-trivial for $\delta=0,1,2$, and we rewrite it as follows:
\begin{equation}\label{eqn1}
x^2+17^k41^\ell 59^m =\lambda y^n,~~x\geq 1, y>1, \gcd(x,y)=1, k, \ell, m\geq 0, n\geq 3,
\end{equation}
where $\lambda\in \{1,2, 4\}$. Here, we completely solve \eqref{eqn1} for integers $x,y,k, \ell, m$ and $n$. More precisely, we prove:
\begin{thm}\label{thm}
The equation \eqref{eqn1} has no solution except for:  
\begin{itemize}
\item[(i)] $n=3$, all the solutions are given by Table \ref{Tp3};
\item[(ii)] $n=4$, all the solutions are given by Table \ref{Tp4};
\item[(iii)] $n=5$, the solution is $(x,y,\lambda,k,\ell,m)=(38,5,1,0,2,0)$.
\end{itemize}
\end{thm}

We organize the paper as follows. In \S\ref{S2}, we recall some important ingredients which are used in the proof of Theorem \ref{thm}. We present the proof of Theorem \ref{thm} in \S\ref{S3}, which is divided into several subsections. In \S\ref{proofpropm3}, we treat the case $3\mid n$. In this case, we transform \eqref{eqn1} into several elliptic equations written in cubic models for which we need to determine all $\{17,41,59\}$-integer points. Recall that if $S$ is a finite set of prime numbers, then an $S$-integer is  a rational number $r/s$ with coprime integers $r$ and $s>0$ such that the prime factors of $s$ lie in $S$. We utilize the same technique in \S\ref{proofpropm4} to handle \eqref{eqn1} when $n$ is a multiple of $4$; however, in this case, we transform \eqref{eqn1} into quartic curves. We handle \eqref{eqn1} for prime $n \geq  5$ in \S\ref{proofpropp}. Here, we apply a method that uses primitive divisor theorem for Lehmer sequences. In this case too, we are able to obtain several elliptic equations written in cubic models for which we find all their $\{17, 41, 59\}$-integer points. We finally summerize the proof of Theorem \ref{thm} in \S\ref{proofthm}. In \S\ref{S4}, we discuss some existing results   and derive some corollaries. All the computations are done with MAGMA \cite{BCP97}. 

\section{Preliminaries}\label{S2}
We begin this section with the following lemma which follows from \cite[Corollary 3.1]{YU05}.

\begin{lem}\label{lemYU}
Let $d$ be a square-free positive integer and $h(-d)$ denotes the class number of $\mathbb{Q}(\sqrt{-d})$. For any odd integer $n\geq 3$ coprime to $h(-d)$, all integer solutions $(X,Y,Z)$ of the equation 
\begin{equation*}\label{eqYU}
X^2+dY^2=\lambda Z^n,~~ X,Y\geq 1, \gcd(X, dY)=1, \lambda=1,2,4,
\end{equation*} 
can be expressed as 
$$\frac{X+Y\sqrt{-d}}{\sqrt{\lambda}}=\varepsilon_1\left(\frac{a+\varepsilon_2 b\sqrt{-d}}{\sqrt{\lambda}}\right)^n,$$
where $\varepsilon_1,\varepsilon_2\in\{-1, 1\}$, and $a$ and $b$ are positive integers satisfying $\lambda Z=a^2+b^2d$ and $\gcd(a, bd)=1$.
\end{lem}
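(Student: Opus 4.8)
The statement is, up to cosmetic rearrangement, \cite[Corollary~3.1]{YU05}, so the task is chiefly to check that its hypotheses specialise to ours; for completeness I sketch the underlying argument. Write $K=\mathbb{Q}(\sqrt{-d})$ and let $\mathcal{O}_K$ be its ring of integers. I would begin with the parity constraints forced by $\gcd(X,dY)=1$: when $\lambda\in\{2,4\}$, reducing $X^{2}+dY^{2}=\lambda Z^{n}$ modulo $2$ shows that $d$ is odd and $X,Y$ are both odd, and when $\lambda=4$ a reduction modulo $4$ forces in addition $d\equiv 3\pmod 4$. In particular the elements $\tfrac{a+\varepsilon_{2}b\sqrt{-d}}{\sqrt{\lambda}}$ appearing in the conclusion are meaningful and have norm $Z$ (for $\lambda=4$ they lie in $\mathcal{O}_K$, since then $\mathcal{O}_K=\mathbb{Z}\!\left[\tfrac{1+\sqrt{-d}}{2}\right]$).

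Next I would factor, as ideals of $\mathcal{O}_K$,
\[
\bigl(X+Y\sqrt{-d}\,\bigr)\bigl(X-Y\sqrt{-d}\,\bigr)=\lambda\,(Z)^{n}.
\]
If a prime ideal $\mathfrak{p}$ divides both $\mathfrak{b}:=\bigl(X+Y\sqrt{-d}\,\bigr)$ and its conjugate $\overline{\mathfrak{b}}$, then $\mathfrak{p}$ divides $(2X)$ and $\bigl(2Y\sqrt{-d}\,\bigr)$, so the hypothesis $\gcd(X,dY)=1$ forces $\mathfrak{p}$ to lie above $2$. Removing from $\mathfrak{b}$ the explicit contribution of the prime(s) above $2$ dictated by $\lambda$ (none when $\lambda=1$; the relevant ramified or split prime when $\lambda=2,4$), one is left with $\mathfrak{b}=\mathfrak{l}^{\,t}\mathfrak{c}$ in which $\mathfrak{c}$ and $\overline{\mathfrak{c}}$ are coprime; comparing with $\lambda\,(Z)^{n}$ and invoking unique factorisation of ideals then exhibits $\mathfrak{c}$ as an $n$-th power, say $\mathfrak{c}=\mathfrak{a}^{\,n}$ with $N(\mathfrak{a})=Z$.

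Since $\mathfrak{a}^{\,n}=\mathfrak{c}$ is principal, $[\mathfrak{a}]^{n}=1$ in the ideal class group, and as $\gcd(n,h(-d))=1$ this forces $[\mathfrak{a}]=1$, i.e.\ $\mathfrak{a}=(\alpha)$ with $N(\alpha)=Z$. Tracking the prime above $2$ back through the factorisation gives $X+Y\sqrt{-d}=u\,\pi\,\alpha^{\,n}$ for a unit $u\in\mathcal{O}_K^{\times}$ and an explicit element $\pi$ determined by $\lambda$ and the splitting of $2$ in $K$. Because $n$ is odd we have $\gcd\bigl(n,|\mathcal{O}_K^{\times}|\bigr)=1$ except when $d=3$ and $3\mid n$; in the non-exceptional cases every unit is an $n$-th power of a unit, so $u$ is absorbed into $\alpha$, and the exceptional case is disposed of by a direct check on the six units of $\mathbb{Q}(\sqrt{-3})$. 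Writing $\alpha$ in the shape $\tfrac{a+\varepsilon_{2}b\sqrt{-d}}{\sqrt{\lambda}}$ with $a,b>0$ after fixing signs, and collecting the surviving unit into $\varepsilon_{1}\in\{-1,1\}$, one obtains
\[
\frac{X+Y\sqrt{-d}}{\sqrt{\lambda}}=\varepsilon_{1}\!\left(\frac{a+\varepsilon_{2}\,b\sqrt{-d}}{\sqrt{\lambda}}\right)^{\!n},\qquad \lambda Z=a^{2}+b^{2}d,\quad \gcd(a,bd)=1.
\]

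The step I expect to be the main obstacle is the bookkeeping at the prime $2$ in the cases $\lambda=2,4$: one must determine precisely which prime above $2$ occurs in $\mathfrak{b}$ and to what power, ensure that its contribution is exactly an $n$-th power (this is where the arithmetic of $2$ in $K$, together with the parity constraints above, enters decisively), and then verify that the element produced is normalised exactly as $\tfrac{a+\varepsilon_{2}b\sqrt{-d}}{\sqrt{\lambda}}$, so that the identity $\lambda Z=a^{2}+b^{2}d$ holds on the nose. By contrast, the coprimality of $\mathfrak{c}$ and $\overline{\mathfrak{c}}$, the passage to an $n$-th power, and the principalisation via $\gcd(n,h(-d))=1$ are routine, and the unit analysis for $\mathbb{Q}(i)$ and $\mathbb{Q}(\sqrt{-3})$ is a minor aside.
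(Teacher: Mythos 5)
Your proposal matches the paper exactly in approach: the paper offers no proof of this lemma beyond citing \cite[Corollary 3.1]{YU05}, which is precisely the reduction you make, and your sketch of the underlying ideal-factorisation argument (coprimality away from $2$, passage to an $n$-th power of an ideal, principalisation via $\gcd(n,h(-d))=1$, and absorption of units for odd $n$) is the standard route behind Yuan's result. No discrepancy to report.
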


Let $\alpha$ and $\beta$ be algebraic integers such that $(\alpha + \beta)^2$ and $\alpha\beta$ are non-zero coprime rational integers as well as $\alpha/\beta$ is not a root of unity. Then the pair $(\alpha, \beta)$ is called Lehmer pair, and two Lehmer pairs $(\alpha_1, \beta_1)$ and $(\alpha_2, \beta_2)$ are said to be equivalent if $\alpha_1/\alpha_2=\beta_1/\beta_2\in\{\pm 1, \pm\sqrt{-1}\}$. For any Lehmer pair $(\alpha, \beta)$, the Lehmer sequence is defined by  
$$\mathcal{L}_n(\alpha, \beta)=\begin{cases}
\dfrac{\alpha^n-\beta^n}{\alpha-\beta} & \text{ if } 2\nmid n, \vspace{1mm}\\
\dfrac{\alpha^n-\beta^n}{\alpha^2-\beta^2} & \text{ if } 2\mid n.
\end{cases}$$
A prime divisor $p$ of $\mathcal{L}_n(\alpha, \beta)$ is primitive if  $p\nmid(\alpha^2-\beta^2)^2
\mathcal{L}_1(\alpha, \beta) \mathcal{L}_2(\alpha, \beta) \cdots \mathcal{L}_{n-1}(\alpha, \beta)$. 
Note that $\left( (\alpha+\beta)^2, (\alpha-\beta)^2\right)$ is the parameter of the Lehmer pair $(\alpha, \beta)$. 
We extract the following classical result from Bilu et al. \cite[Theorem 1.4 and Tables 2, 4]{BH01}. 
\begin{lem}\label{lemBH}
For any prime $p>13$, the Lehmer numbers $\mathcal{L}_p (\alpha, \beta) $ have primitive divisors. Further, if $p$ is a prime such that $7\leq p\leq 13$ and the Lehmer numbers $\mathcal{L}_p(\alpha, \beta)$ have no primitive divisors, then up to equivalence, the parameters $(A, B)$ of the corresponding Lehmer pair $(\alpha, \beta)$ are given by  
\begin{itemize}
\item[(i)] for $q=7, (A, B)=(1,-7), (1, -19), (3, -5), (5, -7), (13, -3), (14, -22)$;  
\item[(ii)] for $q=13, (A, B)=(1,-7)$.
\end{itemize}
\end{lem}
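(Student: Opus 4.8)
The plan is to obtain Lemma~\ref{lemBH} as a direct specialization of the primitive divisor theorem for Lehmer sequences of Bilu, Hanrot and Voutier \cite{BH01}, rather than to reprove that deep result. Recall that the heart of \cite{BH01} is the assertion that there is an absolute, effectively computable bound---namely $n>30$---beyond which \emph{every} Lehmer number $\mathcal{L}_n(\alpha,\beta)$ possesses a primitive divisor, together with a \emph{complete} classification, recorded in their Tables~2 and~4, of all Lehmer pairs $(\alpha,\beta)$ (up to equivalence) for which the finitely many remaining terms $\mathcal{L}_n(\alpha,\beta)$ with $n\le 30$ fail to have a primitive divisor. I would take both of these as given.

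The proof then amounts to restricting the index to a prime $p$ and reading off the tables. First I would dispose of the large primes: for every prime $p>30$ the general bound immediately yields a primitive divisor of $\mathcal{L}_p(\alpha,\beta)$. It then remains to treat the primes $p$ with $7\le p\le 30$, that is $p\in\{7,11,13,17,19,23,29\}$, by inspecting the finite list of exceptional pairs. For the four primes $p\in\{17,19,23,29\}$ the tables contain no exceptional entry, so $\mathcal{L}_p(\alpha,\beta)$ always has a primitive divisor; combined with the case $p>30$, this proves the first assertion of the lemma for all primes $p>13$. For the three primes $p\in\{7,11,13\}$ I would again consult the tables: the index $p=11$ produces no exceptional pair, while the exceptional pairs for $p=7$ and $p=13$ are precisely the ones recorded in parts~(i) and~(ii), after expressing each pair through its parameter $(A,B)=((\alpha+\beta)^2,(\alpha-\beta)^2)$.

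The only genuinely elementary steps are bookkeeping ones, and they are where I expect the main care to be needed rather than any real obstacle. One must match the parameter convention used here---the pair $(A,B)=((\alpha+\beta)^2,(\alpha-\beta)^2)$---with the normalization adopted in \cite{BH01}, then verify that the listed values are transcribed correctly and that the notion of equivalence of Lehmer pairs coincides on both sides. The substantive difficulty, namely the existence of the uniform bound $n>30$ and the exhaustiveness of the tables, is supplied entirely by \cite{BH01}, whose argument combines Baker-type lower bounds for linear forms in logarithms with an extensive computer search; this is assumed throughout and not reproved here.
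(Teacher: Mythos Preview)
Your proposal is correct and matches the paper's approach exactly: the paper does not prove this lemma but simply extracts it from \cite[Theorem~1.4 and Tables~2,~4]{BH01}, which is precisely what you outline. Your additional bookkeeping remarks about matching parameter conventions and transcribing the tables are reasonable but go beyond what the paper itself provides.
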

We derive the following lemma from \cite{LE30,WA55}.
\begin{lem}\label{lemLE}
If $p$ is a primitive prime divisor of a Lehmer number $L_n(\alpha, \beta)$, then $p\equiv \pm 1 \pmod{n}$. Further, if $D=(\alpha^2-\beta^2)^2$ then for any odd prime $n$, $p\equiv \left(\frac{D}{p}\right)\pmod n$, where $\left(\frac{D}{p}\right)$
denotes the Legendre symbol.
\end{lem}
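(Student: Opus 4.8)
The plan is to translate the divisibility hypothesis into a statement about the multiplicative order of $\theta:=\alpha/\beta$ in the residue field at a prime above $p$, and then read off the congruence from whether that residue field is $\mathbb{F}_p$ or $\mathbb{F}_{p^2}$. Write $A=(\alpha+\beta)^2$ and $B=(\alpha-\beta)^2$, so that $A,B,\alpha\beta\in\mathbb{Z}$, $\alpha\beta=(A-B)/4$, and $D=(\alpha^2-\beta^2)^2=AB$. I would first record what primitivity buys us. Since $p\nmid(\alpha^2-\beta^2)^2=AB$, we get $p\nmid A$ and $p\nmid B$; moreover $p\nmid\alpha\beta$, for if $p\mid\alpha\beta$ then modulo a prime $\mathfrak{p}$ above $p$ exactly one of $\alpha,\beta$ vanishes (they cannot both, as $p\nmid A$), whence $U_m:=(\alpha^m-\beta^m)/(\alpha-\beta)$ reduces to a nonzero power of the surviving one for every $m$, contradicting $p\mid\mathcal{L}_n$. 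Fixing such a $\mathfrak{p}$ in the ring of integers of $K=\mathbb{Q}(\alpha,\beta)$, the elements $\alpha$, $\beta$, $\alpha\pm\beta$ are all units modulo $\mathfrak{p}$, so $\theta=\alpha/\beta$ is a well-defined element of $(\mathcal{O}_K/\mathfrak{p})^{\times}$.

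Next I would establish the law of apparition. Because $\alpha+\beta$ is a unit modulo $\mathfrak{p}$, the relations $p\mid\mathcal{L}_m$ and $\mathfrak{p}\mid U_m$ are equivalent for every $m$ (they differ only by this unit factor), and because $\alpha-\beta$ is a unit modulo $\mathfrak{p}$, one has $\mathfrak{p}\mid U_m$ if and only if $\alpha^m\equiv\beta^m$, i.e. $\theta^m\equiv 1\pmod{\mathfrak{p}}$. Hence $p\mid\mathcal{L}_m$ if and only if $\operatorname{ord}_{\mathfrak{p}}(\theta)\mid m$. The hypothesis that $p$ is a primitive divisor of $\mathcal{L}_n$ says exactly that $\operatorname{ord}_{\mathfrak{p}}(\theta)\mid n$ while $\operatorname{ord}_{\mathfrak{p}}(\theta)\nmid k$ for $1\le k<n$, which forces $\operatorname{ord}_{\mathfrak{p}}(\theta)=n$.

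The final step identifies the residue field. The elements $\theta$ and $\theta^{-1}=\beta/\alpha$ are the two roots of a quadratic over $\mathbb{Q}$: indeed $\theta+\theta^{-1}=(\alpha^2+\beta^2)/(\alpha\beta)\in\mathbb{Q}$, while $\theta-\theta^{-1}=(\alpha^2-\beta^2)/(\alpha\beta)$ is a square root of $D$ divided by the rational integer $\alpha\beta$, so $\mathbb{Q}(\theta)=\mathbb{Q}(\sqrt{D})$. Thus the splitting of $p$ is governed by $\left(\frac{D}{p}\right)$, which is nonzero because $p\nmid D=AB$. If $\left(\frac{D}{p}\right)=1$, then $p$ splits, $\theta\in\mathbb{F}_p^{\times}$, and $n=\operatorname{ord}_{\mathfrak{p}}(\theta)\mid p-1$, so $p\equiv 1\pmod{n}$. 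If $\left(\frac{D}{p}\right)=-1$, then $\mathcal{O}_K/\mathfrak{p}\cong\mathbb{F}_{p^2}$ and the Frobenius $x\mapsto x^p$ induces the nontrivial automorphism of $\mathbb{Q}(\sqrt{D})$, which interchanges the conjugate roots $\theta$ and $\theta^{-1}$; hence $\theta^p=\theta^{-1}$, giving $\theta^{p+1}=1$ and $n\mid p+1$, so $p\equiv -1\pmod{n}$. In both cases $p\equiv\left(\frac{D}{p}\right)\pmod{n}$, and in particular $p\equiv\pm 1\pmod{n}$, which is the assertion (the Legendre-symbol form requiring $p$ odd, as usual).

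The step I expect to require the most care is the bookkeeping at $\mathfrak{p}$: verifying $p\nmid\alpha\beta$ from primitivity, checking that passing between $\mathcal{L}_m$ and $U_m$ is harmless because the discarded factor $\alpha+\beta$ is a unit modulo $\mathfrak{p}$, and confirming that the Frobenius genuinely sends $\theta$ to $\theta^{-1}$ (rather than merely relating them through the Galois action) when $\left(\frac{D}{p}\right)=-1$. Once the order computation $\operatorname{ord}_{\mathfrak{p}}(\theta)=n$ is in place, both congruences follow formally.
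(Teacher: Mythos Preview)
The paper does not actually prove this lemma: it is simply quoted from Lehmer \cite{LE30} and Ward \cite{WA55} with no argument given. So there is nothing to compare against, and the relevant question is just whether your argument stands on its own.

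It does. The key steps --- showing $p\nmid\alpha\beta$ from primitivity, translating $p\mid\mathcal{L}_m$ into $\theta^m\equiv 1\pmod{\mathfrak p}$ via the unit factor $\alpha+\beta$, deducing $\operatorname{ord}_{\mathfrak p}(\theta)=n$, and then reading off $n\mid p\mp 1$ from the splitting behaviour of $p$ in $\mathbb{Q}(\sqrt{D})$ --- are all sound. The identification of Frobenius with $\theta\mapsto\theta^{-1}$ in the inert case is correct because the two roots of the minimal quadratic of $\bar\theta$ over $\mathbb{F}_p$ have product $1$. One small gap in coverage: you obtain the first assertion $p\equiv\pm 1\pmod n$ only as a corollary of the Legendre-symbol statement, which tacitly assumes $p$ odd (and you only treat the order argument, not the case of composite $n$ directly). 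For the paper's application this is irrelevant --- there $n=p\ge 5$ is prime and the primitive divisors under consideration are $41$ and $59$ --- but if you wanted a fully general statement you would need a word about $p=2$.
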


\section{Proof of Theorem \ref{thm}}\label{S3}
We divide this section in three subsections to make the proof explicit.  
\subsection{When $n$ is a multiple of $3$}\label{proofpropm3}
Assume that $n=3N$ for some integer $N\geq 1$. Then \eqref{eqn1} can be written as
\begin{equation}\label{eqm31}
x^2+17^k41^\ell 59^m =\lambda y^{3N},~~x\geq 1, y>1, \gcd(x,y)=1, k, \ell, m\geq 0, N\geq 1,
\end{equation}
where $\lambda\in \{1,2, 4\}$.

\begin{prop}\label{propm3}
All integer solutions of \eqref{eqm31} are listed in Table \ref{Tp3}.  
\end{prop}

\begin{proof}
Let $k=6k_1+k_2, \ell=6\ell_1+\ell_2$ and $m=6m_1+m_2$ for some integers $k_1, \ell_1, m_1\geq 0$ and $k_2, \ell_2, m_2\in\{0,1,2,3,4,5\}$. Then 
\begin{equation}\label{eqm32}
\lambda^2 17^k 41^\ell 59^m=Dz^6,
\end{equation}
where 
\begin{equation}\label{eqm33}
\begin{cases}
D=\lambda^2 17^{k_2}41^{\ell_2}59^{m_2}\\
 z=17^{k_1}41^{\ell_1}59^{m_1}.
 \end{cases}
\end{equation}
We now multiply \eqref{eqm31} by $\lambda^2$ and then utilize \eqref{eqm32} to get
$$
(\lambda x)^2+Dz^6=(\lambda y^N)^3.
$$
Dividing both sides of the above equality by $z^6$, we get 
\begin{equation}\label{eqm34}
X^2=Y^3-D,
\end{equation}
where 
\begin{equation}\label{eqm35}
\begin{cases}
X=\dfrac{\lambda x}{z^3}\vspace{1mm}\\
 Y=\dfrac{\lambda y^N}{z^2}.
 \end{cases}
\end{equation}
Now the problem of finding the integer solutions of \eqref{eqm31} is reduced to finding $\{17,41,59\}$-integer points on the $648$ elliptic curves defined by \eqref{eqm34}. We use \texttt{SIntegralPoints} subroutine of MAGMA (V2.25-5)\cite{BCP97} to compute all $\{17,41,59\}$-integer points on these elliptic curves. Note that we eliminate $\{17,41,59\}$-integers points $(X,Y)$ with $XY=0$ as they yield to $xy = 0$. We also eliminate $(X,Y)$ satisfying $\lambda\nmid XY$ by \eqref{eqm35}. Finally taking into account that $\gcd(x,y)=1$,  we don't consider $\{17,41,59\}$-integers points $(X,Y)$ when $X/\lambda$ and $Y/\lambda$ are not coprimes. The rest of $\{17,41,59\}$-integer points and the corresponding $D$ are listed in Table \ref{ts-3}. 

\begin{center}
\begin{longtable}{ccc| ccc}
\caption{\tiny$S$-integral points $(X,Y)$ on \eqref{eqm34} corresponding to $D$'s} \label{ts-3} \\

\hline \multicolumn{1}{c}{$X$} & \multicolumn{1}{c}{$Y$} & \multicolumn{1}{c|}{$D$}& \multicolumn{1}{c}{$X$} & \multicolumn{1}{c}{$Y$}& \multicolumn{1}{c}{$D$}  \\ \hline 
\endfirsthead

\multicolumn{6}{c}%
{{\bfseries \tablename\ \thetable{} -- continued from previous page}} \\
\hline \multicolumn{1}{c}{$X$} & \multicolumn{1}{c}{$Y$} & \multicolumn{1}{c}{$D$}& \multicolumn{1}{c}{$X$} & \multicolumn{1}{c}{$Y$}& \multicolumn{1}{c}{$D$} \\ \hline 
\endhead
\hline \multicolumn{6}{c}{{Continued on next page}} \\ \hline
\endfoot
\hline 
\endlastfoot
5220&307&$17\cdot 41^2\cdot59$& 
$2\cdot245$&$2\cdot33$&$2^2\cdot17^2\cdot41$\\ \hline
$4\cdot7$&$4\cdot3$&$4^2\cdot59$ & $4\cdot21$&$4\cdot5$&$4^2\cdot 59$\\ \hline
$4\cdot393$& $4\cdot35$& $4^2\cdot17^2\cdot59$&$4\cdot525$& $4\cdot41$& $4^2\cdot17^2\cdot59$\\ \hline $4\cdot2389$&$4\cdot951$&$4^2\cdot17^5\cdot41\cdot59$&$4\cdot5195$&$4\cdot 189$&$4^2\cdot17^2\cdot 59$\\ \hline 
$4\cdot10535$& $4\cdot411$& $4^2\cdot41^4\cdot59$& $4\cdot28735$&$4\cdot591$&$4^2\cdot59$\\ \hline
$4\cdot55049$&$4\cdot945$&$4^2\cdot41^2\cdot59^3$&$4\cdot155963$&$4\cdot1827$&$4^2\cdot17\cdot41^3\cdot59$\\ \hline
$4\cdot791561$&$4\cdot5391$&$4^2\cdot17\cdot41\cdot59^3$&
$4\cdot2834943$&$4\cdot12629$&$4^2\cdot17^3\cdot41^3\cdot59$\\ \hline
$2\cdot36785$&$2\cdot2469$&$2^2\cdot17^3\cdot41^2\cdot59^2$&
$\frac{2\cdot721267}{17^3}$&$\frac{2\cdot6153}{17^2}$&$4^2\cdot17^2\cdot59$
 \end{longtable}
\end{center}
\vspace{-10mm}
We now utilize the relations \eqref{eqm32}, \eqref{eqm33} and \eqref{eqm35} to find the solutions of \eqref{eqm31}, which are listed in Table \ref{Tp3}. This completes the proof. 
\end{proof}

\begin{center}
\begin{longtable}{ccccccc| ccccccc}
\caption{\small Solutions of  \eqref{eqm31}} \label{Tp3} \\

\hline \multicolumn{1}{c}{$x$} & \multicolumn{1}{c}{$y$} & \multicolumn{1}{c}{$\lambda$}& \multicolumn{1}{c}{$k$} & \multicolumn{1}{c}{$\ell$}& \multicolumn{1}{c}{$m$} & \multicolumn{1}{c|}{$N$} & \multicolumn{1}{c}{$x$} & \multicolumn{1}{c}{$y$} & \multicolumn{1}{c}{$\lambda$}& \multicolumn{1}{c}{$k$} & \multicolumn{1}{c}{$\ell$}& \multicolumn{1}{c}{$m$} & \multicolumn{1}{c}{$N$} \\ \hline 
\endfirsthead

\multicolumn{14}{c}%
{{\bfseries \tablename\ \thetable{} -- continued from previous page}} \\
\hline \multicolumn{1}{c}{$x$} & \multicolumn{1}{c}{$y$} & \multicolumn{1}{c}{$\lambda$}& \multicolumn{1}{c}{$k$} & \multicolumn{1}{c}{$\ell$}& \multicolumn{1}{c}{$m$} & \multicolumn{1}{c|}{$N$}& \multicolumn{1}{c}{$x$} & \multicolumn{1}{c}{$y$} & \multicolumn{1}{c}{$\lambda$}& \multicolumn{1}{c}{$k$} & \multicolumn{1}{c}{$\ell$}& \multicolumn{1}{c}{$m$} & \multicolumn{1}{c}{$N$}
 \\ \hline 
\endhead
\hline \multicolumn{14}{c}{{Continued on next page}} \\ \hline
\endfoot
\hline 
\endlastfoot
5220&307&1&1&2&1&1& 245&33& 2& 2& 1& 0& 1\\ \hline
7&3&4& 0&0&1&1&  21&5&4&  0&0&1&1 \\ \hline
393& 35& 4& 2& 0& 1& 1&525& 41& 4& 2& 0& 1& 1\\ \hline

2389&951&4&5&1&1&1&5195&189&4&2&0&1&1\\ \hline
10535&411&4&0&4&1&1&
28735&591&4&0&0&1&1\\ \hline 
55049&945&4&0&2&3&1&155963&1827&4&1&3&1&1\\ \hline
791561&5391&4&1&1&3&1&2834943&12629&4&3&3&1&1\\ \hline
36785&2469&2&3&2&2&1&721267&6153&4&8&0&1&1 
 \end{longtable}
\end{center}
\vspace{-10mm}
\subsection{When $n$ is a multiple of $4$}\label{proofpropm4}
Let $n=4t$ for some integer $t\geq 1$. Then \eqref{eqn1} becomes
\begin{equation}\label{eqm41}
x^2+17^k41^\ell 59^m=\lambda\left(y^t\right)^4,~ x\geq 1, y>1, \gcd(x, y)=1, k,\ell, m\geq 0, t\geq 1, \lambda=1,2,4.\
\end{equation}

\begin{prop}\label{propm4}
All the solutions of  \eqref{eqm41} are listed in Table \ref{Tp4}.
\end{prop}

\begin{proof}
Assume that $k=4k_1+k_2, \ell=4\ell_1+\ell_2$ and $m=2m_1+m_2$, where $k_1,\ell_1, m_1\geq 0$ are integers and $k_2,\ell_2, m_2\in\{0,1,2,3\}$. Then \eqref{eqm41} can be written as
$$x^2+17^{k_2}41^{\ell_2}59^{m_2}(17^{k_1}41^{\ell_1}59^{m_1})^4=\lambda\left(y^t\right)^4.$$\vspace{1mm}
We put $X:=\dfrac{x}{(17^{k_1}41^{\ell_1}59^{m_1})^2}$ and $Y:=\dfrac{y^t}{17^{k_1}41^{\ell_1}59^{m_1}}$ in the above equation to get
\begin{equation}\label{eqxx}
X^2=\lambda Y^4-17^{k_2}41^{\ell_2}59^{m_2}.
\end{equation}
This defines $192$ quartic curves. We use \texttt{SIntegralLjunggrenPoints} subroutine of Magma to determine all $\{17,41,59\}$-integer points on these curves. Taking $XY\ne 0$ into account, we list the values of $(X,Y,\lambda, k_2, \ell_2, m_2)$ in Table \ref{Tn4}.

\begin{center}
\begin{longtable}{cccccc| cccccc}
\caption{\small Solutions of  \eqref{eqxx}} \label{Tn4} \\

\hline \multicolumn{1}{c}{$X$} & \multicolumn{1}{c}{$Y$} & \multicolumn{1}{c}{$\lambda$}& \multicolumn{1}{c}{$k_2$} & \multicolumn{1}{c}{$\ell_2$}& \multicolumn{1}{c|}{$m_2$} & \multicolumn{1}{c}{$X$} & \multicolumn{1}{c}{$Y$} & \multicolumn{1}{c}{$\lambda$}& \multicolumn{1}{c}{$k_2$} & \multicolumn{1}{c}{$\ell_2$}& \multicolumn{1}{c}{$m_2$} \\ \hline 
\endfirsthead

\multicolumn{12}{c}%
{{\bfseries \tablename\ \thetable{} -- continued from previous page}} \\
\hline \multicolumn{1}{c}{$X$} & \multicolumn{1}{c}{$Y$} & \multicolumn{1}{c}{$\lambda$}& \multicolumn{1}{c}{$k_2$} & \multicolumn{1}{c}{$\ell_2$}& \multicolumn{1}{c|}{$m_2$} & \multicolumn{1}{c}{$X$} & \multicolumn{1}{c}{$Y$} & \multicolumn{1}{c}{$\lambda$}& \multicolumn{1}{c}{$k_2$} & \multicolumn{1}{c}{$\ell_2$}& \multicolumn{1}{c}{$m_2$}  \\ \hline 
\endhead
\hline \multicolumn{12}{c}{{Continued on next page}} \\ \hline
\endfoot
\hline 
\endlastfoot
840&29&1&0&2&0&8&3&1&1&0&0\\ \hline
239&13&2&0&0&0&11&3&2&0&1&0 \\ \hline
69&7&2&0&1&0&171&11&2&0&1&0\\ \hline
31&5&2&2&0&0&299&15&2&2&1&0\\ \hline
9&5&4&0&1&1
 \end{longtable}
\end{center}
\vspace{-8mm}
We now use the expressions for $X$ and $Y$ to find the solutions of \eqref{eqm41} from Table \ref{Tn4}. We list all these solutions in Table \ref{Tp4}. This completes the proof. 
\end{proof}

\begin{center}
\begin{longtable}{ccccccc| ccccccc}
\caption{\small Solutions of  \eqref{eqm41}} \label{Tp4} \\

\hline \multicolumn{1}{c}{$x$} & \multicolumn{1}{c}{$y$} & \multicolumn{1}{c}{$\lambda$}& \multicolumn{1}{c}{$k$} & \multicolumn{1}{c}{$\ell$}& \multicolumn{1}{c}{$m$} & \multicolumn{1}{c|}{$t$} & \multicolumn{1}{c}{$x$} & \multicolumn{1}{c}{$y$} & \multicolumn{1}{c}{$\lambda$}& \multicolumn{1}{c}{$k$} & \multicolumn{1}{c}{$\ell$}& \multicolumn{1}{c}{$m$} & \multicolumn{1}{c}{$t$} \\ \hline 
\endfirsthead

\multicolumn{14}{c}%
{{\bfseries \tablename\ \thetable{} -- continued from previous page}} \\
\hline \multicolumn{1}{c}{$x$} & \multicolumn{1}{c}{$y$} & \multicolumn{1}{c}{$\lambda$}& \multicolumn{1}{c}{$k$} & \multicolumn{1}{c}{$\ell$}& \multicolumn{1}{c}{$m$} & \multicolumn{1}{c|}{$t$}& \multicolumn{1}{c}{$x$} & \multicolumn{1}{c}{$y$} & \multicolumn{1}{c}{$\lambda$}& \multicolumn{1}{c}{$k$} & \multicolumn{1}{c}{$\ell$}& \multicolumn{1}{c}{$m$} & \multicolumn{1}{c}{$t$}
 \\ \hline 
\endhead
\hline \multicolumn{14}{c}{{Continued on next page}} \\ \hline
\endfoot
\hline 
\endlastfoot
840&29&1&0&2&0&1&8&3&1&1&0&0&1\\ \hline
239&13&2&0&0&0&1&11&3&2&0&1& 0&1\\ \hline
69&7&2&0&1&0&1&171&11&2&0&1&0&1 \\ \hline
31&5&2&2&0&0&1&299&15&2&2&1&0&1 \\ \hline
9&5&4&0&1&1&1
 \end{longtable}
\end{center}
\vspace{-10mm}
\subsection{When $n\geq 5$ is prime}\label{proofpropp}
Here, we replace $n$ by $p$ in \eqref{eqn1} to assert that the exponent is prime, that is,
\begin{equation}\label{eqp1}
x^2+17^k41^\ell 59^m =\lambda y^p,~~x\geq 1, y>1, \gcd(x,y)=1, k, \ell, m\geq 0, p\geq 5,
\end{equation}
where $\lambda =1, 2,4$. 
\begin{prop}\label{propp}The only integer solution of \eqref{eqp1} is $(x,y,\lambda,k,\ell,m,p)=(38,5,1,0,2,0,5)$.

\end{prop}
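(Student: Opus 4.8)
The plan is to reduce \eqref{eqp1} to a situation governed by Lehmer sequences and then to a finite set of elliptic curves. First I would write $\lambda y^p = x^2 + 17^k41^\ell 59^m$ and split off squares from the prime powers: write $k = 2k_1 + k_2$, $\ell = 2\ell_1 + \ell_2$, $m = 2m_1 + m_2$ with $k_2,\ell_2,m_2 \in \{0,1\}$, so that $17^k41^\ell 59^m = d\, c^2$ where $d = 17^{k_2}41^{\ell_2}59^{m_2}$ is squarefree and $c = 17^{k_1}41^{\ell_1}59^{m_1}$. Thus $x^2 + d\,c^2 = \lambda y^p$. Since $p \geq 5$ is prime, I would first dispose of the (few) values of $p$ dividing the relevant class numbers $h(-d)$ for the eight squarefree values $d \in \{1,17,41,59,17\cdot41,17\cdot59,41\cdot59,17\cdot41\cdot59\}$; for those finitely many exceptional pairs $(d,p)$ one falls back on the elliptic/quartic machinery of \S\ref{proofpropm3}–\S\ref{proofpropm4} or a direct descent. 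For the generic case $\gcd(p,h(-d))=1$, Lemma \ref{lemYU} applies (after checking $\gcd(x,dc)=1$, which follows from $\gcd(x,y)=1$), giving
\[
\frac{x + c\sqrt{-d}}{\sqrt{\lambda}} = \varepsilon_1\left(\frac{a + \varepsilon_2 b\sqrt{-d}}{\sqrt{\lambda}}\right)^p,\qquad \lambda y = a^2 + b^2 d,\ \gcd(a,bd)=1.
\]

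Next I would set up the Lehmer pair. Put $\alpha = \dfrac{a + \varepsilon_2 b\sqrt{-d}}{\sqrt{\lambda}}$ and $\beta = \dfrac{a - \varepsilon_2 b\sqrt{-d}}{\sqrt{\lambda}}$, so $(\alpha+\beta)^2 = 4a^2/\lambda$ and $\alpha\beta = (a^2+b^2d)/\lambda = y$, which are coprime nonzero rational integers (coprimality from $\gcd(a,bd)=1$ together with $\lambda \mid 2a$ in the cases $\lambda=2,4$); and $\alpha/\beta$ is not a root of unity since $b \neq 0$ and $y>1$. Comparing imaginary parts in the displayed identity and dividing by $\varepsilon_2 b\sqrt{-d}/\sqrt{\lambda}\cdot$ (appropriate power), one gets $\pm c = \mathcal{L}_p(\alpha,\beta)\cdot b^{p-1}$ up to the standard normalization, hence the Lehmer number $\mathcal{L}_p(\alpha,\beta)$ divides $c = 17^{k_1}41^{\ell_1}59^{m_1}$, so all prime divisors of $\mathcal{L}_p(\alpha,\beta)$ lie in $\{17,41,59\}$. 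By Lemma \ref{lemBH}, for every prime $p > 13$ the number $\mathcal{L}_p(\alpha,\beta)$ has a primitive divisor $q$; by Lemma \ref{lemLE}, $q \equiv \pm 1 \pmod p$, and refining with the Legendre-symbol congruence $q \equiv \left(\tfrac{D}{q}\right) \pmod p$ where $D = (\alpha^2-\beta^2)^2$. Since $q \in \{17,41,59\}$, the congruence $q \equiv \pm 1 \pmod p$ forces $p \mid q-1$ or $p \mid q+1$, i.e. $p \in \{2,3,5,7,11,13,29\}$ — hence in our range $p \in \{5,7,11,13,29\}$ (and $p=29$ only via $59 = 2\cdot29+1$, to be checked directly or excluded by the Legendre condition).

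It then remains to handle the finitely many primes $p \in \{5,7,11,13\}$ (plus $p=29$). For $p \in \{7,11,13\}$, Lemma \ref{lemBH} gives the explicit short list of parameters $(A,B) = ((\alpha+\beta)^2,(\alpha-\beta)^2)$ of Lehmer pairs whose $p$-th Lehmer number lacks a primitive divisor; I would match each against $A = 4a^2/\lambda$, $B = -4b^2 d/\lambda$ with $d \in \{1,17,41,59,\ldots\}$ and check whether the resulting $(a,b,d,\lambda)$ can actually occur, discarding all of them (the values $d$ available are too large to fit the tabulated $|B|$ except in trivially excluded cases). For the remaining prime $p=5$ (the only one that genuinely contributes), I would expand $x + c\sqrt{-d} = \varepsilon_1\varepsilon_2^{?}(a+\varepsilon_2 b\sqrt{-d})^5 \cdot \lambda^{-2}$ and equate imaginary parts to obtain, for each of the eight $d$ and each $\lambda$, a Thue-type equation $\pm c = b(5a^4 - 10a^2b^2d + b^4d^2)/\lambda^2$; clearing and homogenizing, each such equation defines an elliptic curve in a cubic model (after the standard substitution $u = a^2/b^2$ or by setting it up as $C \cdot w^2 = $ quartic in $a/b$ and taking its Jacobian), and one needs all $\{17,41,59\}$-integer points. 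I would invoke \texttt{SIntegralPoints} in MAGMA on this finite list of curves, discard points with $ab=0$ or violating $\gcd(a,bd)=1$ or $\lambda \nmid \cdot$, and recover the single solution $(x,y,\lambda,k,\ell,m,p) = (38,5,1,0,2,0,5)$ by back-substituting through $c$ and $d$. The main obstacle is the bookkeeping in the $p=5$ step: correctly enumerating all $8 \times 3$ combinations of $(d,\lambda)$, reducing each to a genuinely computable integral-points problem on an elliptic curve, and being careful about the $\lambda = 2,4$ normalizations (where $\sqrt{\lambda}$ appears) so that no solution is lost; the $p > 13$ elimination via Lemmas \ref{lemBH}–\ref{lemLE} is conceptually the cleanest part.
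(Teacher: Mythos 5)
Your overall strategy matches the paper's: reduce to $x^2+dc^2=\lambda y^p$ with $d$ squarefree supported on $\{17,41,59\}$, apply Lemma \ref{lemYU} (here the class numbers $h(-d)$ all lie in $\{1,3,4,8,72\}$, so no prime $p\geq 5$ divides any of them and there are no exceptional pairs $(d,p)$ to ``fall back'' on), extract a Lehmer pair with parameters $(4a^2/\lambda,-4b^2d/\lambda)$, observe that $\mathcal{L}_p(\alpha,\beta)$ has all its prime factors in $\{17,41,59\}$, and combine Lemmas \ref{lemBH} and \ref{lemLE} to pin down $p$. Two small slips along the way: the correct identity is $\pm c=b\,\mathcal{L}_p(\alpha,\beta)$, not $b^{p-1}\mathcal{L}_p$ (harmless, since you only use divisibility); and the congruence $q\equiv\pm1\pmod p$ for $q\in\{17,41,59\}$ only permits $p\in\{5,7,29\}$ among primes $\geq5$ --- the primes $11$ and $13$ never arise from it, though including them and eliminating them later does no damage.

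The genuine gap is your treatment of $p=7$. You dispose of $p\in\{7,11,13\}$ solely by matching against the exceptional parameters of Lemma \ref{lemBH}, i.e.\ under the assumption that $\mathcal{L}_p$ lacks a primitive divisor, and you then declare $p=5$ ``the only one that genuinely contributes.'' But $41\equiv-1\pmod 7$, and for $d=17$ and $d=1003$ one has $\left(\tfrac{-d}{41}\right)=-1$, so $\mathcal{L}_7(\alpha,\beta)$ may perfectly well possess the primitive divisor $41$: no contradiction arises and $p=7$ survives the sieve. It must therefore be treated exactly as you treat $p=5$: equating imaginary parts in \eqref{eqp2} gives
$$\lambda^3 17^{k_1}41^{\ell_1}59^{m_1}=\pm b\left(7a^6-35a^4b^2d+21a^2b^4d^2-b^6d^3\right),\qquad d\in\{17,1003\},$$
which, after running through the subcases $b\in\{1,17^{k_1},59^{m_1},17^{k_1}59^{m_1}\}$, reduces to a couple of hundred elliptic curves in cubic models whose $S$-integral points must be computed (the paper's Case II; no solutions arise there, but that is the output of a computation, not an a priori exclusion). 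Without this step your argument silently drops a case that could in principle contain solutions. Your plan for $p=5$ (imaginary parts, quartic or cubic models, $\{17,41,59\}$-integral points, back-substitution to $(38,5,1,0,2,0,5)$) agrees with the paper's Case III.
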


\begin{proof}
Suppose that $k=2k_1+k_2, \ell=2\ell_1+\ell_2$ and $m=2m_1+m_2$, where $k_1,\ell_1, m_1\geq 0$ are integers and $k_2,\ell_2,m_2\in\{0,1\}$. Then 
$17^k41^\ell 59^m=dz^2$, where $d=17^{k_2}41^{\ell_2} 59^{m_2}$ and $z=17^{k_1}41^{\ell_1} 59^{m_1}$. 
Therefore \eqref{eqp1} becomes
$$x^2+dz^2=\lambda y^p, ~~x\geq 1, y>1, \gcd(x,dz)=1, k, \ell, m\geq 0, p\geq 5, \lambda =1, 2,4.$$
Using MAGMA, we see that $h(-d)\in\{1,3,4,8,72\}$, where $h(-d)$ denotes the class number of $\mathbb{Q}(\sqrt{-d})$.
Thus by Lemma \ref{lemYU}, we have
\begin{equation}\label{eqp2}
\frac{x+z\sqrt{-d}}{\sqrt{\lambda}}=\varepsilon_1\left(\frac{a+\varepsilon_2 b\sqrt{-d}}{\sqrt{\lambda}}\right)^p,
\end{equation} 
where $\varepsilon_1, \varepsilon_2\in\{-1, 1\}$, and $a$ and $b$ are positive integers satisfying
\begin{equation}\label{eqp3}
a^2+b^2d=\lambda y.
\end{equation}
Clearly, $b$ is odd. Since $y$ is odd, so that $a$ is even ( resp. odd) when $\lambda =1$ (resp. $\lambda = 2, 4$). 

Assume that $$\alpha:=\frac{a+\varepsilon_2 b\sqrt{-d}}{\sqrt{\lambda}},~~\beta:=\frac{a-\varepsilon_2 b\sqrt{-d}}{\sqrt{\lambda}}.$$
Clearly, both $\alpha$ and $\beta$ are algebraic integers satisfying $\gcd((\alpha+\beta)^2, \alpha\beta)=1$. Also, $\alpha/\beta$ is a root of $\lambda yZ^2-2(a^2-b^2d)+\lambda y=0$, and hence $\alpha/\beta$ is not a root of unity. Therefore, $(\alpha, \beta)$ is  Lehmer pair with parameters $(4a^2/\lambda, -4b^2d/\lambda)$. 

If $\mathcal{L}_n$ is the $n$-th Lehmer number for the pair $(\alpha, \beta)$, then 
\begin{equation}\label{eqp4}
|\mathcal{L}_p(\alpha, \beta)|=\frac{z}{b}=\frac{17^{k_1}41^{\ell_1} 59^{m_1}}{b}.
\end{equation}
Let $q$ be a primitive prime divisor of $\mathcal{L}(\alpha, \beta)$. Then by \eqref{eqp4}, $q\in\{17,41,59\}$. Utilizing Lemma \ref{lemLE}, we see that $q\ne 17$ since $p\geq 5$. Also, 
$$q=\begin{cases}
41 & \text{ when } p=5,7;\\
59 & \text{ when } p=5, 29.
\end{cases}
$$
We first consider $q=59$. Then by the definition of primitive divisor, $q\nmid (\alpha^2-\beta^2)^2=-16a^2b^2d/\lambda^2$. This gives that  $(k_2, \ell_2, m_2)\in\{(0,0,0), (0,1,0), (1,0,0), (1,1,0)\}$, which implies $d\in\{1,41,17, 697\}$. Again since $\left(\frac{-4b^2d}{59}\right)=\left(\frac{-d}{59}\right)=-1$, so that $59\equiv 1\pmod p$ implies $p=5$.

If $q=41$, then as before we get  $(k_2, \ell_2, m_2)\in\{(0,0,0), (0,0,1), (1,0,0), (1,0,1)\}$ and thus $d\in\{1,59, 17, 1003\}$. Again since $\left(\frac{-4b^2d}{41}\right)=\left(\frac{-d}{41}\right)=1$ or $-1$ according as $d=1,59$ or $d=17, 1003$, so that $41\equiv \pm 1\pmod p$ gives $(p, d)\in\{(7,17),(7,1003), (5,1), (5,59)\}$. We now divide the proof into several cases depending on the values of $p$ and $q$.
\subsection*{Case I:  $p>7$} 
From the above, it follows that $\mathcal{L}_p(\alpha, \beta)$ has no primitive divisors for $p>7$. This contradicts to the primitive divisor theorem for Lehmer sequences, which states that, if $p\geq 3$, then $\mathcal{L}_p(\alpha, \beta)$ has a primitive prime divisor with a few exceptional pairs $(\alpha, \beta)$. For any prime $p> 7$, these exceptional pairs are given by Lemma \ref{lemBH} in terms of their parameters. Therefore using Lemma \ref{lemBH} we get $(p,4a^2/\lambda, 4b^2d/\lambda)=(13,1,7)$, which further implies that $d=7$. This is not possible as the only prime divisors of $d$ are $17, 41, 59$. Therefore, \eqref{eqp1} has no integer solutions for $p>7$. 
\subsection*{Case II:  $p=7$} 
In this case, the only primitive divisor of $\mathcal{L}_7(\alpha, \beta)$ is $41$, and accordingly  $d=17, 1003$. Equating the imaginary parts in \eqref{eqp2}, we get 
\begin{equation}\label{eqp5}
\lambda^317^{k_1}41^{\ell_1}59^{m_1}=\varepsilon b(7a^6-35a^4b^2d+21a^2b^4d^2-b^6d^3),
\end{equation}
where $\varepsilon=\varepsilon_1\varepsilon_2=\pm 1$ and $d=17, 1003$. Also by the definition of primitive divisor, $41\nmid (\alpha^2-\beta^2)^2=-16a^2b^2d/\lambda$, we have $b\ne 41$. Since $b$ is odd and $\gcd(a, b)=1$, so that \eqref{eqp5} gives 
$$b=1,17^{k_1},59^{m_1}, 17^{k_1}59^{m_1}.$$

We first consider $b=1$. Then \eqref{eqp5} reduces to
$$\lambda^317^{k_1}41^{\ell_1}59^{m_1}=\varepsilon (7a^6-35a^4d+21a^2d^2-d^3).$$
 Assume that $k_1=2k_4+k_3, \ell_1=2\ell_4+\ell_3, m_1=2m_4+m_3$ with $k_3, \ell_3, m_3\in\{0, 1\}$ and then set $D:=\varepsilon\lambda 17^{k_3}41^{\ell_3}59^{m_3}$ to get
$$D(\lambda 17^{k_4}41^{\ell_4}59^{m_4})^2=7a^6-35a^4d+21a^2d^2-d^3.$$
Multiplying both sides by $7^2D^3$, and then putting $(X, Y):=(7Da^2, 7D^2\lambda 17^{k_4}41^{\ell_4}59^{m_4})$, we arrive at the following:
$$Y^2=X^3-35dDX^2+147d^2D^2X-49d^3D^3.$$
This defines $96$ elliptic curves, where we use \texttt{IntegralPoints} subroutine of MAGMA to compute all integer points. We first remove all integer points $(X,Y)$ satisfying $XY=0$ or $7\nmid X$ since $abd\ne0$ and $7\mid \gcd(X,Y)$. We then check $a^2=X/{7D}$ for the remaining points $(X,Y)$; but none of these shows that $a$ is an integer. Thus, none of these integer points lead to an integer solutions of \eqref{eqp1}. 

Assume that $b=17^{k_1}$. Then \eqref{eqp5} reduces to
$$\lambda^341^{\ell_1}59^{m_1}=\varepsilon (7a^6-35a^4b^2d+21a^2b^4d^2-b^6d^3).$$
As before, we consider $D_1:=\varepsilon\lambda 41^{\ell_3}59^{m_3}$ for $ \ell_3, m_3\in\{0, 1\}$. Then the above equation reduces to
$$D_1(\lambda 41^{\ell_4}59^{m_4})^2=7a^6-35a^4b^2d+21a^2b^4d^2-b^6d^3,$$
 where $\ell_1=2\ell_4+\ell_3, m_1=2m_4+m_3$.
We divide both sides by $b^6$ and then multiply by $7^2D_1^3$ to get
$$Y^2=X^3-35dD_1X^2+147d^2D_1^2X-49d^3D_1^3,$$
where $X=7D_1a^2/b^2$ and $Y=7D_1\lambda 41^{\ell_4}59^{m_4}/b^3$.
As in the previous case, we use \texttt{IntegralPoints} subroutine of MAGMA to compute all $\{17\}$-integral points on the above $48$ elliptic curves. We eliminate all integer points $(X,Y)$ satisfying $XY=0$ or $7\nmid X$ since $abd\ne0$ and $7\mid \gcd(X,Y)$. For the remaining points $(X,Y)$, we examine $a^2=Xb^2/{7D_1}$. However, none of these gives integer value to $a$, and hence none of these integer points lead to an integer solution of \eqref{eqp1}. 

Let $b=59^{m_1}$. Then \eqref{eqp5} becomes
$$\lambda^317^{k_1}41^{\ell_1}=\varepsilon (7a^6-35a^4b^2d+21a^2b^4d^2-b^6d^3).$$
As before, put $D_2:=\varepsilon\lambda 17^{k_3}41^{\ell_3}$ for $k_3, \ell_3, m_3\in\{0, 1\}$. Then we have
$$D_2(\lambda 17^{k_4}41^{\ell_4})^2=7a^6-35a^4b^2d+21a^2b^4d^2-b^6d^3,$$
 where $k_1=2k_4+k_3, \ell_1=2\ell_4+\ell_3$.
We divide both sides by $b^6$ and then multiply by $7^2D_1^3$ to get
$$Y^2=X^3-35dD_2X^2+147d^2D_2^2X-49d^3D_2^3,$$
where $X=7D_2a^2/b^2$ and $Y=7D_2\lambda 17^{k_4}41^{\ell_4}/b^3$.
In this case too, we use \texttt{IntegralPoints} subroutine of MAGMA to compute all $\{59\}$-integral points on the above $48$ elliptic curves.  As in the last case, we see that one  none of these integer points lead to an integer solution of \eqref{eqp1}.

Finally consider $b=17^{k_1}59^{m_1}$. Then \eqref{eqp5} reduces to
$$\lambda^341^{\ell_1}=\varepsilon (7a^6-35a^4b^2d+21a^2b^4d^2-b^6d^3).$$
As before, putting $D_3:=\varepsilon\lambda 41^{\ell_3}$ for $\ell_3\in\{0, 1\}$ in the above equation, we get 
$$D_3(\lambda 41^{\ell_4})^2=7a^6-35a^4b^2d+21a^2b^4d^2-b^6d^3,$$
 where $\ell_1=2\ell_4+\ell_3$.
We now divide both sides by $b^6$ and then multiply by $7^2D_3^3$ to arrive at 
$$Y^2=X^3-35dD_3X^2+147d^2D_3^2X-49d^3D_3^3,$$
where $X=7D_3a^2/b^2$ and $Y=7D_3\lambda 41^{\ell_4}/b^3$.
As in previous cases, we use \texttt{IntegralPoints} subroutine of MAGMA to compute all $\{17,59\}$-integral points on the above $24$ elliptic curves. Utilizing the same technique as before, we can conclude that none of these integer points lead to an integer solution of \eqref{eqp1}. 

\subsection*{Case III:  $p=5$} 
In this case, both $41$ and $59$ are primitive divisors of $\mathcal{L}_5(\alpha, \beta)$, and 
$(q, d)=(41, 1), (41, 59), (59, 1), (59, 17), (59, 41), (59, 1003)$. 
Equating the imaginary parts in \eqref{eqp2}, we get 
\begin{equation}\label{eqp6}
\lambda^217^{k_1}41^{\ell_1}59^{m_1}=\varepsilon b(5a^4-10a^2b^2d+b^4d^2),
\end{equation}
where $\varepsilon=\varepsilon_1\varepsilon_2=\pm 1$ . In case of $q=41$,  by the definition of primitive divisor, $b\ne 41$. Thus \eqref{eqp6} gives  
$$b=1,17^{k_1},59^{m_1}, 17^{k_1}59^{m_1}.$$
Similarly, when $q=59$,  we get (from \eqref{eqp6})
$$b=1,17^{k_1}, 41^{\ell_1}, 17^{k_1}41^{\ell_1}.$$

If $b=1$, then $d=1,17,41,59,1003$ and \eqref{eqp6} reduces to
$$\lambda^217^{k_1}41^{\ell_1}59^{m_1}=\varepsilon (5a^4-10a^2d+d^2).$$
Let $k_1=2k_4+k_3, \ell_1=2\ell_4+\ell_3, m_1=2m_4+m_3$ with $k_3,\ell_3, m_3\in\{0,1\}$ and set $D:=\varepsilon 17^{k_3}41^{\ell_3}59^{m_3}$. Then the above equation becomes 
$$D(\lambda 17^{k_4}41^{\ell_4}59^{m_4})^2=5a^4-10a^2d+d^2.$$
We now multiply both sides by $D$, and then put $(X, Y):=(a, D\lambda 17^{k_4}41^{\ell_4}59^{m_4})$ to get
$$Y^2=5DX^4-10DdX^2+Dd^2.$$
The above equation gives $80$ quartic curves each for each value of $Dd$. Here, we use \texttt{IntegralQuarticPoints} subroutine of MAGMA to compute all integral points on these curves. As in Case II, we first eliminate the integer points $(X,Y)$ such that $XY=0$ since $abd\ne0$. For the remaining points, we have $(X,Y, D)=(1,-2,-1), (2,41,41), (2,11,1),$ $ (12,149,1)$. Clearly, $(X,Y, D)=(1,-2,-1)$ gives $(x,y,\lambda,k,\ell,m)=(1,1,2,0,0,0)$. On the other hand, $(X,Y,D)=(2,41,41)$ gives $(a,\lambda, k,\ell, m)=(2,1,0,2,0)$. Here, $d=1$ and hence by \eqref{eqp3} $y=5$, which further implies $x=38$. Therefore $(x,y,\lambda, k, \ell, m, p)=(38,5,1,0,2,0,5)$ is a solution of \ref{eqp1}. Also, since $2,17,41,59$ are the only possible prime factors of $Y$, so that $(X,Y,D)=(2,11,1), (12,149,1)$ are not possible.

We now consider the case $b=17^{k_1}$.  Then \eqref{eqp6} becomes 
$$\lambda^241^{\ell_1}59^{m_1}=\varepsilon (5a^4-10a^2b^2d+b^4d^2).$$
Here, $d=1,17,41,59,1003$.
Let $D_1:=\varepsilon 41^{\ell_3}59^{m_3}$ for $k_3, \ell_3, m_3\in\{0, 1\}$, and divide both sides by $b^4$, we get
$$D_1(\lambda 41^{\ell_4}59^{m_4}/b^2)^2=5(a/b)^4-10(a/b)^2d+d^2,$$
 where $ \ell_1=2\ell_4+\ell_3, m_1=2m_4+m_3$.
We set $(X, Y):=(a/b, \lambda 41^{\ell_4}59^{m_4}/b^2)$ to arrive at the following:
$$D_1Y^2=5X^4-10dX^2+d^2.$$
We use \texttt{SIntegralLjunggrenPoints} subroutine of MAGMA to compute all $\{17\}$-integral points on the $40$ quartic curves defined by the above equation. As in the previous cases, we get $(X,Y, D)=(1,2,-1), (2,1,41), (2,11,1), (12,149,1)$. 
As before, these give $(x,y,\lambda,k,\ell,m,p)=(1,1,2,0,0,0,5), (38,5,1,0,2,0,5)$.

Assume that $b=41^{\ell_1}$.  Then $d=1, 59$ and \eqref{eqp6} reduces to
$$\lambda^217^{k_1}59^{m_1}=\varepsilon (5a^4-10a^2b^2d+b^4d^2).$$
Let $D_2:=\varepsilon 17^{k_3}59^{m_3}$ for $k_3, m_3\in\{0, 1\}$, and divide both sides by $b^4$, we get
$$D_2(\lambda 17^{k_4}59^{m_4}/b^2)^2=5(a/b)^4-10(a/b)^2d+d^2,$$
where $ k_1=2k_4+k_3, m_1=2m_4+m_3$.
Putting $(X, Y):=(a/b, \lambda 17^{k_4}59^{m_4}/b^2)$, we get
$$D_2Y^2=5X^4-10dX^2+d^2.$$
Here too we use \texttt{SIntegralLjunggrenPoints} subroutine of MAGMA to compute all $\{41\}$-integral points on the $16$ quartic curves defined by the last equation. As before, we get $(X,Y, D)=(1,2,-1), (12,149,1)$, which lead to  $(x,y,\lambda,k,\ell,m,p)=(1,1,2,0,0,0,5)$.

For $b=59^{m_1}$,  \eqref{eqp6} becomes 
$$\lambda^217^{k_1}41^{\ell_1}=\varepsilon (5a^4-10a^2b^2d+b^4d^2).$$
As in the previous cases, by setting $D_3:=\varepsilon 17^{k_3}41^{\ell_3}$, this equation can written as
$$D_3(\lambda 17^{k_4}41^{\ell_4}/b^2)^2=5(a/b)^4-10(a/b)^2d+d^2,$$
where $ k_1=2k_4+k_3, \ell_1=2\ell_4+\ell_3$.
Assume that $(X, Y):=(a/b, \lambda 17^{k_4}41^{\ell_4}/b^2)$. Then we get
$$D_3Y^2=5X^4-10dX^2+d^2.$$
As before, we get $(X,Y, D)=(1,2,-1), (2,1,41), (2,11,1)$. The solutions corresponding to these points are already discussed.

Finally for $b=41^{\ell_1}59^{m_1}$, \eqref{eqp6} implies to
$$\lambda^217^{k_1}=\varepsilon (5a^4-10a^2b^2d+b^4d^2).$$
We put $D_4:=\varepsilon 17^{k_3}$ for $k_3\in\{0, 1\}$, and divide both sides by $b^4$ to get
$$D_4(\lambda 17^{k_4}/b^2)^2=5(a/b)^4-10(a/b)^2d+d^2,$$
where $ k_1=2k_4+k_3$.
Writing $(X, Y):=(a/b, \lambda 17^{k_4}$, we get
$$D_3Y^2=5X^4-10dX^2+d^2.$$
Using \texttt{SIntegralLjunggrenPoints} subroutine of MAGMA, we compute all $\{41,59\}$-integral points on the $20$ quartic curves defined by the last equation. Here, we get $(X,Y, D)=(1,2,-1), (2,11,1), (12, 149,1)$, which are already treated in the previous cases. This completes the proof.
\end{proof}
\subsection{Proof of Theorem \ref{thm}}\label{proofthm}
The proof follows Propositions \ref{propm3} and \ref{propm4}, when $n$ is a divisibly by $3$ or $4$. For the remaining part, we assume that $n=pN$ for some prime $p\geq 5$ and an integer $N\geq 1$. Then \eqref{eqn1} can be written as
$$x^2+17^k41^\ell 59^m=\lambda\left(y^N\right)^p,~ x\geq 1, y>1, \gcd(x, y)=1, k,\ell, m\geq 0, N\geq 1, \lambda=1,2,4.
$$
By Proposition \ref{propp}, $(x,y,\lambda,k,\ell,m,p, N)=(38,5,1,0,2,0,5, 1)$ is the only integer solution of this equation. This completes the proof.  

\section{Concluding remarks}\label{S4}
Here, we discuss some existing results using our result and deduce some corollaries. We consider the Diophantine equation 
$$x^2+17^k=2^\delta y^n, x\geq 1, y>1, \gcd(x,y)=1, k, \delta, \geq 0, n\geq 3.$$
Clearly, it has no solutions when $\delta\geq 2$. For $\delta=0$, it follows from \cite[Theorem]{GW12} that $(x,y,k, n)=(8,3,1,4)$ is the only solution of the above equation. On the other hand for $\delta =2$, Abu Muriefah et al.  proved that its only  solutions are $(x,y,k,n)=(239,13,0,4), (31,5,2,4)$. One can put $\ell=m=0$ in Theorem \ref{thm} to get these results. 
On the other hand, \cite[Theorem 1.1]{AZ20} shows that the Diophantine equation  
$$x^2+41^\ell=2^\delta y^n, x\geq 1, y>1, \gcd(x,y)=1, \ell, \delta \geq 0, n\geq 3,$$
has no solutions when $\delta =0$, except for $(x,y,\ell, n)=(840,29,2,4), (38,5,2,5)$. Our result gives all solutions of this equation for any $\delta$. 

Finally we note down the following straightforward corollaries. 
\begin{cor}
The only solutions of the Diophantine equation  
$$x^2+59^m=2^\delta y^n, x\geq 1, y>1, \gcd(x,y)=1, m, \delta, \geq 0, n\geq 3,$$
are: $$(x,y,\delta, m, n)=(7,3,2,1,3), (21, 5,2,1,3), (525,41,2,1,3), (28735, 591, 2, 1,3), (239,13,1,0,4).$$ 
\end{cor}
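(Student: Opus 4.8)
The plan is to deduce this corollary directly from Theorem \ref{thm} by specializing the parameters. The Diophantine equation $x^2+59^m=2^\delta y^n$ is precisely the special case of the equation solved in Theorem \ref{thm} in which $k=\ell=0$; that is, we substitute $k=\ell=0$ into \eqref{eqn1}. First I would observe that, as noted in the introduction, reducing modulo $8$ kills all solutions with $\delta\geq 3$, so only $\delta\in\{0,1,2\}$ can occur, corresponding to $\lambda=1,2,4$ respectively in \eqref{eqn1}.

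Next I would scan the list of exceptional solutions in Theorem \ref{thm} and retain only those with $k=0$ and $\ell=0$. Going through the three cases: in case (i) we have $(k,\ell,m)=(1,2,1)$, which has $k=1\neq 0$, so it is discarded. In case (iii) we have $(k,\ell,m)=(0,2,0)$, which has $\ell=2\neq 0$, so it is discarded. In case (ii), the nine tuples have $(k,\ell,m)$ equal to $(0,2,0),(1,0,0),(0,0,0),(0,1,0),(0,1,0),(0,1,0),(2,0,0),(2,1,0),(0,1,1)$; imposing $k=\ell=0$ leaves only the single tuple $(k,\ell,m)=(0,0,0)$, coming from $(x,y,\lambda,k,\ell,m)=(239,13,2,0,0,0)$ at $n=4$. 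Translating $\lambda=2$ back to $\delta=1$, this yields the solution $(x,y,\delta,m,n)=(239,13,1,0,4)$.

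Finally I would verify this is genuinely a solution: $239^2+59^0=57121+1=57122=2\cdot 28561=2\cdot 13^4$, which checks out, and confirm that $\gcd(239,13)=1$. The only subtlety worth a remark is that Theorem \ref{thm} is stated for $m\geq 0$ together with $k,\ell\geq 0$, so setting $k=\ell=0$ is a legitimate specialization and no solutions are lost; there is no real obstacle here, as the corollary is an immediate consequence of the classification already established. One should also note that $m$ is unconstrained in the surviving tuple beyond $m=0$, i.e., no solution with $m\geq 1$ survives, which is exactly what the statement asserts.
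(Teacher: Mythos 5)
Your proposal is correct and matches the paper's intent exactly: the paper offers no written proof, presenting this as a ``straightforward corollary'' obtained by specializing Theorem \ref{thm} to $k=\ell=0$, which is precisely what you do, and your filtering of the solution list and the identification $\lambda=2\leftrightarrow\delta=1$ are both accurate.
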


\begin{cor}
The solutions of the Diophantine equation  
$$x^2+17^k41^{\ell}=2^\delta y^n, x\geq 1, y>1, \gcd(x,y)=1, k, \ell, \delta, \geq 0, n\geq 3,$$
are $(x,y,\delta,k,\ell, n)=(245,33, 1,2,1,3),  (840,29,0,0,2,4), (8,3,0,1,0,4), (239,13,1,0,0,4),\\ (11,3,1,0,1,4), (69,7,1,0,1,4), (171,11,1,0,1,4), (31,5,1,2,0,4), (299,15,1,2,1,4),\\ (38,5, 0,0,2,5)$.
\end{cor}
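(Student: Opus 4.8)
The plan is to read this off from Theorem \ref{thm} by setting $m=0$. The equation in the corollary is the $m=0$ instance of the master equation $x^2+17^k41^\ell 59^m=2^\delta y^n$ discussed in \S\ref{S1}; as observed there, reducing modulo $8$ and using $17\equiv 41\equiv 1\pmod 8$ forces $\delta\in\{0,1,2\}$, for otherwise $x^2+17^k41^\ell\equiv x^2+1\equiv 0\pmod 8$, which is impossible since $x^2\equiv 0,1,4\pmod 8$. Thus, writing $\lambda:=2^\delta\in\{1,2,4\}$, the equation is exactly \eqref{eqn1} subject to the additional constraint $m=0$.

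It then remains to extract from the solution list of Theorem \ref{thm} those tuples with $m=0$ and to translate $\lambda$ back into $\delta$. The constraint $m=0$ removes the $n=3$ solution $(5220,307,1,1,2,1)$ and the $n=4$ solution $(9,5,4,0,1,1)$, leaving the eight $n=4$ tuples $(840,29,1,0,2,0),(8,3,1,1,0,0),(239,13,2,0,0,0),(11,3,2,0,1,0),(69,7,2,0,1,0),(171,11,2,0,1,0),(31,5,2,2,0,0),(299,15,2,2,1,0)$ and the single $n=5$ tuple $(38,5,1,0,2,0)$. Replacing $\lambda=1,2,4$ by $\delta=0,1,2$ respectively converts these into the nine tuples $(x,y,\delta,k,\ell,n)$ listed in the corollary.

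There is no real obstacle here: the entire content sits inside Theorem \ref{thm}, and the only points that require a little care are checking that the congruence argument genuinely restricts to $\delta\leq 2$ (so nothing is lost in passing to \eqref{eqn1}) and that the $m=0$ sublist, together with the $\lambda\leftrightarrow\delta$ dictionary, has been transcribed completely and correctly.
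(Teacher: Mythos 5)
Your proposal is correct and matches the paper's intent exactly: the paper presents this as a ``straightforward corollary'' of Theorem \ref{thm}, obtained precisely by specializing to $m=0$, using the mod $8$ reduction to restrict $\delta\in\{0,1,2\}$, and converting $\lambda=2^\delta$ back to $\delta$. Your transcription of the nine surviving tuples is complete and accurate.
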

\begin{cor}
 The solutions of the Diophantine equation  
$$x^2+17^k59^m=2^\delta y^n, x\geq 1, y>1, \gcd(x,y)=1, k, m, \delta, \geq 0, n\geq 3$$
are: $(x,y,\delta,k, m, n)= (7,3,2,0,1,3), (21, 5, 2,0,1,3), (393,35,2,2,1,3), (525,41,2,0,1,3), \\ (5159, 189, 2, 2, 1, 3), (28735, 591, 2, 0, 1, 3), (721267, 6153, 2, 8, 1, 3), (8,3,0,1,0,0, 4),\\ (239,13,1,0,0,0, 4),  (31,5,1,2,0,0, 4)$.
\end{cor}
\begin{cor}
The Diophantine equation  
$$x^2+41^{\ell}59^m=2^\delta y^n, x\geq 1, y>1, \gcd(x,y)=1, \ell, m,\delta, \geq 0, n\geq 3,$$
has no solution, except:\\
$(x,y,\delta,\ell, m, n)=( 7,3,2,0,1,3), (21,5,2,0,1,3), (525,41,2,0,1,3), (10535, 411, 2, 4,1,3),\\  (55049, 945, 2, 2,3,3), (28735, 591, 2, 0, 1, 3), (840,29,0,2,0,4),  (239,13,1,0,0,4), \\
  (11,3,1,1,0, 4), (69,7,1,1,0, 4), (171,11,1,1,0, 4), (9,5,2,1,1, 4),  (38,5,0,2,0,5).$
\end{cor}

\subsection*{Acknowledgements}
The second author is grateful to Professor Kotyada Srinivas for stimulating environment at The Institute of Mathematical Sciences during his visiting period while working on this manuscript. The authors are grateful to Professor Douglas McNeil for pointing out a few misprints in the previous version. This project is supported by SERB MATRICS grant (No. MTR/2021/000762) and  CRG grant (No. CRG/2023/007323),  Govt. of India. 


\begin{thebibliography}{25}

\bibitem{AZ20} M. Alan and U. Zengin, {\it On the Diophantine equation $x^2 + 3^a41^b = y^n$}, Period. Math. Hung. {\bf 81} (2020), 284--291.

\bibitem{AA02} S. A. Arif and F. S.  Abu Muriefah,  {\it On the Diophantine equation $x^2 + q^{2k+1}= y^n$},  J. Number Theory {\bf 95} (2002), no. 1, 95--100.

\bibitem{AAL02} S. A. Arif and A. S. Al-Ali, {\it On the Diophantine equation $ax^2+b^m=4y^n$},  Acta Arith. {\bf 103} (2002), no. 4, 343--346.

\bibitem{BP12} A. B\'erczes and I.  Pink,  {\it On the Diophantine equation $x^2 +d^{2\ell+1} = y^n$}, Glasgow Math. J. {\bf 54} (2012), no. 3, 415--428.
\bibitem{APA2024} P. Baruah, A. Das and A. Hoque, {\it Complete solutions of a Lebesgue-Ramanujan-Nagell type equation}, Arch. Math. (Brno), {\bf 60} (2024), 135--145. 

\bibitem{BHS19} S. Bhatter, A. Hoque and R. Sharma, {\it On the solutions of a Lebesgue-Nagell type equation},  Acta Math. Hungar. {\bf 158} (2019), 17--26.

\bibitem{BH01} Y. Bilu, G. Hanrot and F. M. Voutier, {\it Existence of primitive divisors of Lucas and Lehmer numbers (with an appendix by M. Mignotte)}, J. Reine Angew. Math. {\bf 539} (2001), 75--122.

\bibitem{BCP97} W. Bosma, J. Cannon and C. Playoust, {\it The Magma algebra system. I. The user language}, J. Symbolic Comput. {\bf 24} (1997), no. 3, 235--265.

\bibitem{BU01} Y. Bugeaud, {\it On some exponential Diophantine equations}, Monatsh. Math. {\bf 132} (2001), 93--97.

\bibitem{CH22} K. Chakraborty and A. Hoque, {\it On the Diophantine equation $ dx^2+p^{2a}q^{2b}=4y^p$}, Results Math.  {\bf 77} (2022), no. 1, 11 pp. article no. 18.
\bibitem{CHS20} K. Chakraborty, A. Hoque and R. Sharma, {\it Complete solutions of certain Lebesgue-Ramanujan-Nagell equations}, Publ. Math. Debrecen, {\bf 97} (2020), no. 3-4, 339--352.

\bibitem{CHS21} K. Chakraborty, A. Hoque and K. Srinivas, {\it On the Diophantine equation $cx^2+p^{2m}=4y^n$}, Results Math. {\bf 76} (2021), no. 2, 12 pp. article no. 57.

\bibitem{CHS21b} K. Chakraborty, A. Hoque and R. Sharma, {\it On the solutions of certain Lebesgue-Ramanujan-Nagell equations}, Rocky Mountain J. Math. {\bf 51} (2021), no. 2, 459--471.

\bibitem{CO03} J. H. E. Cohn, {\it The Diophantine equation $x^2 + C = y^n$, II}, Acta Arith. {\bf 109} (2003), no. 3,  205--206.
\bibitem{DE17} M. Demirci, {\it On the Diophantine equation $x^2 + 5^a p^b = y^n$}, Filomat {\bf 31} (2017), no. 16, 5263--5269.

\bibitem{GW12} S. Gou and T. T. Wang, {\it The Diophantine equation $x^2 + 2^a.17^b = y^n$}, Czechoslovak Math. J. {\bf 62} (2012), 645--654.

\bibitem{HO20} A. Hoque, {\it On a class of Lebesgue-Ramanujan-Nagell equations}, Period. Math. Hung. {\bf 88} (2024), no. 2, 418--428.


\bibitem{LE93} M. Le, {\it On the Diophantine equations $d_1x^2+2^{2m}d_2=y^n$ and $d_1x^2+d_2=4y^n$}, Proc. Amer. Math. Soc. {\bf 118} (1993), 67--70.

\bibitem{LS20} M. Le and G. Soydan, {\it A brief survey on the generalized Lebesgue-Ramanujan-Nagell equation}, Surv. Math. Appl. {\bf 115} (2020), 473--523.

\bibitem{LE30} D. L. Lehmer, {\it  An extended theory of Lucas' functions},  Ann. of Math.  {\bf 31} (1930), no. 3, 419--448.

\bibitem{LTT09} F. Luca, Sz. Tengely and A. Togb\'{e}, {\it On the Diophantine equation $x^2 +C = 4y^n$}, Ann. Sci. Math. Qu\'e. {\bf 33} (2009), no. 2, 171--184.

\bibitem{LT09} F. Luca and A. Togb\'e, {\it On the equation $x^2 + 2^\alpha 13^\beta = y^n$}, {\it Colloq. Math.} {\bf 116} (2009), no. 1, 139--146.

\bibitem{AL08} F. S. Abu Muriefah, F. Luca and A. Togb\'e, {\it  On the Diophantine equation $x^2 + 5^a 13^b = y^n$},  Glasgow Math. J. {\bf 50} (2008), no. 1, 175--181.

\bibitem{AL09} F. S. Abu Muriefah, F. Luca, S. Siksek and Sz. Tengely,  {\it On the Diophantine equation $x^2 +C = 2y^n$}, Int. J. Number Theory {\bf 5} (2009), no. 6, 1117--1128.

\bibitem{P2013} X. -W. Pan, {\it The exponential Lebesgue-Nagell equation $x^2+p^{2m} = y^n$}, Period. Math. Hung. {\bf 67} (2013), no. 2, 231--242.

\bibitem{PI04} I. Pink, {\it On the Diophantine equation $x^2 + (p_1 \cdots p_k)^2 = 2y^n$}, Publ. Math. Debrecen {\bf 65} (2004), no. 1/2, 205--213.

\bibitem{PI07} I. Pink, {\it On the Diophantine equation $x^2 + 2^a 3^b 5^c 7^d = y^n$}, Publ. Math. Debrecen {\bf 70} (2007), no. 1-2, 149--166.

\bibitem{WA55} W. Ward, {\it The intrinsic divisors of Lehmer numbers}, Ann. of Math. {\bf 62} (1955), no. 2, 230--236. 

\bibitem{YU05} P. -Z. Yuan, {\it On the Diophantine equation $ax^2 + by^2 = ck^n$}, Indag. Math. (N. S.) {\bf 16} (2005), no. 2, 301--320.

\bibitem{ZL12} H. Zhu, M. Le and A. Togb\'e, {\it On the exponential Diophantine equation $x^2 +p^{2m} = 2y^n$}, Bull. Aust. Math. Soc. {\bf 86} (2012), no. 3, 303--314.

\bibitem{ZL15} H. Zhu, M. Le, G. Soydan and A. Togb\'e, {\it On the exponential Diophantine equation $x +2^a p^b = y^n$},  Period. Math. Hung. {\bf 70} (2015), 233--247.
\end{thebibliography}
\end{document}